\newtheorem{theorem}{Theorem}[section]
\newtheorem{corollary}[theorem]{Corollary}
\newtheorem{lemma}[theorem]{Lemma}
\newtheorem{example}[theorem]{Example}
\newtheorem{remark}[theorem]{Remark}
\newtheorem{definition}[theorem]{Definition}
\newtheorem{fact}[theorem]{Fact}
\newcommand{\T}{\mathbb{T}}
\newcommand{\Z}{\mathbb{Z}}
\newcommand{\N}{\mathbb{N}}
\def\cont{\mathfrak c}
\def\T{{\mathbb T}}
\def\Z{{\mathbb Z}}
\def\N{{\mathbb N}}
\def\R{{\mathbb R}}
\def\Q{{\mathbb Q}}
\def\cont{\mathfrak c}
\begin{document}
\title[Characterizing torsion subgroups of the circle]{Characterizing infinite torsion subgroups of the circle through arithmetic-type sequences}
\subjclass[2010]{Primary: 11J71, Secondary: 11K36, 20F38, 22B05}
\keywords{Circle group, torsion subgroup, characterized subgroup, topologically torsion, arithmetic sequence, arithmetic-type sequence}

\author{Ayan Ghosh}
\address{Department of Mathematics, Jadavpur University, Kolkata-700032, India}
\email {ayanghosh.jumath@gmail.com}

\author{Pratulananda Das}
\address{Department of Mathematics, Jadavpur University, Kolkata-700032, India}
\email {pratulananda@yahoo.co.in}

\begin{abstract}
A subgroup $H$ of the circle group $\T$ is called characterized by a sequence of integers $(u_n)$ if $H=\{x\in\T: \lim\limits_{n\to\infty} u_nx=0\}$. In a recent work [Das et al., Bull. Sci. Math. 199 (2025), 103580], the structure of characterized subgroups corresponding to arithmetic-type sequences was investigated. Building upon this work, we further show that a characterized subgroup associated with an arithmetic-type sequence is countable if and only if it is torsion. Further we prove that any infinite torsion subgroup of $\T$ can be characterized by an arithmetic-type sequence with bounded ratio. Moreover, our findings demonstrate that the dichotomy observed in Eggleston’s theorem [Theorem 16, Eggleston, Proc. Lond. Math. Soc. 54(2) (1952), 42--93] for arithmetic sequences does not extend, in general, to the broader class of arithmetic-type sequences.
\end{abstract}
\maketitle

\section{Introduction and background.\vspace{.3cm} \\}
An element $x$ of the circle group $\mathbb{T}$ is called topologically torsion corresponding to a sequence of integers $(u_n)$ if $\lim_{n\to\infty} u_nx=0_{\mathbb{T}}$. The most significant results concerning these elements were obtained for arithmetic sequences. In this note, we focus on arithmetic-type sequences (recently introduced in \cite{DGT1}) and conduct a thorough investigation of these elements and corresponding characterized subgroups, including their cardinality aspects. Since arithmetic sequences are a special case of arithmetic-type sequences, our findings offer a broader perspective on several intriguing properties associated with them. For notations and terminology, see Section 2.

Recall that an element $x$ of an abelian group $X$ is torsion if there exists an integer $k>0$ such that $kx = 0$ (more specifically called $k$-torsion in this case). An element $x$ of an abelian topological group $X$ is  \cite{B}:
\begin{itemize}
\item[(i)] {\em topologically torsion} if $n!x \rightarrow 0;$
\item[(ii)] {\em topologically $p$-torsion}, for a prime $p$, if $p^nx \rightarrow 0.$
\end{itemize}

It is obvious that any $p$-torsion element is topologically $p$-torsion. Armacost \cite{A} defined the subgroups
$$ X_p = \{x \in X: p^nx \rightarrow 0\} \ ~\mbox{and} ~ \ X! =  \{x \in X: n!x \rightarrow 0\}$$
of an abelian topological group $X$, and started to describe the elements of these subgroups. Note that the above two notions are just special cases of the following general notion considered in (Section 4.4.2, \cite{DPS}).

\begin{definition}
Let $(a_n)$ be a sequence of integers, the subgroup
$$
t_{(a_n)}(\T) := \{x\in \T: a_nx \to 0\mbox{ in } \T\}
$$
of $\T$ is called {\em a characterized} $($by $(a_n))$ {\em subgroup} of $\T$.
\end{definition}

The theory of so-called characterized subgroups has a rich and distinguished history, with some of its foundational motivations rooted in the study of the distribution of sequences of real multiples modulo one. The classical case, where such sequences are uniformly distributed modulo one, was first examined in the seminal work of Hermann Weyl \cite{W} (see also \cite{BDBW, BDMW1, DPS} for further developments and recent contributions in this area). The interest to the case when these sequences are small, actually  null sequences, stems from Harmonic Analysis where $A$-sets (short for Arbault sets) were introduced in \cite{A1}, in particular, trigonometric series (see also \cite{BuKR, El}).

Again coming back to Armacost,

(a) obviously $t_{(p^n)}(\T)$ contains
the Pr\" ufer group $\Z(p^\infty)$. Armacost \cite{A} proved that $t_{(p^n)}(\T)$ simply coincides with $\Z(p^\infty)$ and $x$ is a topologically $p$-torsion element if and only if $supp(x)$ (defined after Fact \ref{lemmanew}) is finite.

(b) at the same time, he \cite{A} posed the problem to describe the group $\T! = t_{(n!)}(\T)$ which was much later resolved independently and almost simultaneously in \cite[Chap. 4]{DPS} and by J.-P. Borel \cite{B2}.

In particular, in both the above mentioned instances, the sequences of integers, concerned are arithmetic sequences. Recall that a sequence of positive integers $(a_n)$ is called an arithmetic sequence if
$$
1<a_1<a_2<a_3< \ldots <a_n< \ldots \ ~ \ \mbox{ and } a_n|a_{n+1} \ ~ \ \mbox{ for every } n\in\N  .
$$
For an arithmetic sequence of integers $(a_n)$, the sequence of ratios $(q_n)$ is defined as
$$
q_1=a_1 \mbox{     \ and \      } q_n=\frac{a_n}{a_{n-1}}  \mbox{ for  } n \geq 2.
$$
Note that, $A\subseteq\N$ is called
\begin{itemize}
\item[(i)] $q$-bounded if the sequence of ratios $(q_n)_{n\in A}$ is bounded.
\item[(ii)] $q$-divergent if the sequence of ratios $(q_n)_{n\in A}$  diverges to $\infty$.
\end{itemize}
We say $(a_n)$ is $q$-bounded ($q$-divergent) if $\N$ is $q$-bounded ($q$-divergent).

As previously discussed, some of the most intriguing cases involve topologically $\underline{a}$-torsion elements characterized by arithmetic sequences. The foundational results of Armacost \cite{A} and Borel \cite{B2} were extended to a broader context involving arbitrary arithmetic sequences in \cite{DI1}, where a complete description of the topologically $\underline{a}$-torsion elements corresponding to such sequences was provided. In a notable development, however, a non-arithmetic sequence $(\zeta_n)$ was introduced in \cite{DK}, defined as follows:
\begin{equation}\label{eqnonarith}
1,2,4,6,12, 18, 24,  \ldots, n!, 2\cdot n!, 3 \cdot n!, \ldots , n \cdot n!, (n+1)!, \ldots
\end{equation}
It was established in \cite{DK} that $t_{(\zeta_n)}(\T) = \Q/\Z$. Motivated by this observation, for an arithmetic sequence $(a_n)$ the following general class of non-arithmetic sequences was introduced in \cite{DG8}. Let $(d_n^{a_n})$ be an increasing sequence of integers formed by the elements of the set,
\begin{equation}\label{nonarithdef}
\{ra_{k-1} \ : \ 1\leq r< q_{k}\}.
\end{equation}
When there is no confusion regarding the sequence $(a_n)$, we simply denote this sequence by $(d_n)$. Note that for $a_n=n!$ corresponding non-arithmetic sequence $(d_n)$ coincides with the sequence $(\zeta_n)$. A through investigation regarding the subgroup $t_{(d_n)}(\T)$, in particular cardinality related questions and its relation with $t_{(a_n)}(\T)$ was carried out in \cite{DG8}. The whole investigation reiterates that these characterized subgroups are infinitely generated countable torsion subgroup of the circle group $\T$.

An increasing sequence of integers $(e_n)$ is called an arithmetic-type sequence if
$$
(a_n)\subseteq (e_n^{a_n}) \subseteq (d_n^{a_n}).
$$
Again, when there is no confusion regarding the sequence $(a_n)$, we simply denote this sequence by $(e_n)$. Therefore, for any arithmetic sequence $(a_n)$, we always have
\begin{equation}\label{eqincluchar}
 t_{(d_n)}(\T) \subseteq t_{(e_n)}(\T) \subseteq t_{(a_n)}(\T).
\end{equation}

\section{Notations.\vspace{.3cm} \\}
Throughout $\R$, $\Q$, $\Z$, $\T$ and $\N$ will stand for the set of all real numbers, the set of all rational numbers, the set of all integers, the set of all complex numbers on the unit circle and the set of all natural numbers \em(note that we do not consider zero as a natural number) \em respectively. The first three are equipped with their usual abelian group structure and the circle group $\T=\R/\Z$ is equipped with the operation of addition mod 1. Following \cite{Ka}, we may identify $\T$ with the interval [0,1] identifying 0 and 1. Any real valued function $f$ defined on $\T$ can be identified with a periodic function defined on the whole real line $\R$ with period 1, i.e., $f(x+1)=f(x)$ for every real $x$. When referring to a set $X\subseteq \T$ we assume that $X\subseteq [0,1]$ and $0\in X$ if and only if $1\in X$. For a real $x$, we denote its fractional part by $\{x\}$ and $\|x\|$ the distance from the integers, i.e., $\min\big\{\{x\},1-\{x\}\big\}$.\\

Let $G$ be an abelian group. The cyclic subgroup of $G$ generated by $g\in G$ is denoted by $\langle g\rangle$. The cyclic group of order $n$ is denoted by $\Z(n)$. If $S\subseteq G$ then the subgroup generated by $S$ is defined as
$$
\langle S\rangle = \bigg\{\sum\limits_{i=1}^j m_is_i : j\in\N, m_i\in\Z, s_i\in S\bigg\}.
$$
The subgroup of torsion elements ($p$-torsion) of $G$ is denoted by $t(G)$ (resp., by $t_p(G)$). For the circle group $\T$ we denote $t_p(\T)$ by $\Z(p^\infty)$. A subgroup $H$ of $G$ is called torsion if $H=t(H)$.

For arithmetic sequences, the following facts will be used in this sequel time and again. So, before moving onto our main results here we recapitulate that once.
\begin{fact}\label{lemmanew}\cite{DI1}
For any arithmetic sequence $(a_n)$ and $x\in\T$, we can find a unique sequence $c_n\in [0,q_n-1]$ such that
\begin{equation}\label{canonical:repr}
x=\sum\limits_{n=1}^{\infty}\frac{c_n}{a_n},
\end{equation}
where $c_n<q_n-1$ for infinitely many $n$.
\end{fact}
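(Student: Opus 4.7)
The plan is to establish existence by a greedy (Cantor-like) digit extraction and uniqueness via a telescoping identity. First, identifying $\T$ with $[0,1)$, I would define the sequence $(c_n)$ inductively by setting $r_0=x$, and for $n\geq 1$ letting $c_n=\lfloor a_n r_{n-1}\rfloor$ and $r_n=r_{n-1}-c_n/a_n$. With the convention $a_0:=1$, an easy induction shows $r_{n-1}\in[0,1/a_{n-1})$, so $a_n r_{n-1}<a_n/a_{n-1}=q_n$, which yields $c_n\in\{0,1,\ldots,q_n-1\}$. Since $r_n<1/a_n\to 0$, the partial sums $\sum_{k=1}^{n} c_k/a_k = x-r_n$ converge to $x$, giving existence of a representation.

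To ensure that $c_n<q_n-1$ for infinitely many $n$, I would argue by contradiction: if $c_n=q_n-1$ for all $n\geq N$, then the key telescoping identity
$$\frac{q_n-1}{a_n}=\frac{1}{a_{n-1}}-\frac{1}{a_n}\quad\text{(since }a_n=q_n a_{n-1}\text{),}$$
gives $\sum_{n>N}(q_n-1)/a_n=1/a_N$; but then $r_N=1/a_N$, contradicting $r_N<1/a_N$. For uniqueness, suppose $\sum c_n/a_n=\sum c_n'/a_n$ are two representations with digits in the allowed range, each having infinitely many strictly less than $q_n-1$, and let $m$ be the least index where they differ. Without loss of generality $c_m>c_m'$, so
$$\frac{c_m-c_m'}{a_m}=\sum_{n>m}\frac{c_n'-c_n}{a_n}.$$
The right-hand side is at most $\sum_{n>m}(q_n-1)/a_n=1/a_m$, with equality only if $c_n'=q_n-1$ for all $n>m$; the hypothesis excludes this, forcing a strict inequality $(c_m-c_m')/a_m<1/a_m$, which contradicts $c_m-c_m'\geq 1$.

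The main technical subtlety is the uniqueness step, where the condition ``$c_n<q_n-1$ for infinitely many $n$'' must be invoked precisely to exclude the boundary case analogous to $0.999\ldots=1.000\ldots$ in base ten; once the telescoping identity $(q_n-1)/a_n=1/a_{n-1}-1/a_n$ is in hand, both existence and uniqueness reduce to routine estimates on the tails.
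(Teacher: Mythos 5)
Your argument is correct: the greedy digit extraction together with the telescoping identity $(q_n-1)/a_n=1/a_{n-1}-1/a_n$ is precisely the standard proof of this canonical (mixed-radix) representation, which the paper itself does not prove but quotes from \cite{DI1}. The only point worth making explicit is that in the uniqueness step equality of the two series in $\T$ a priori means equality modulo $1$; since the admissibility condition (digits in $[0,q_n-1]$ with infinitely many $c_n<q_n-1$) forces each sum to lie in $[0,1)$, the two sums coincide as real numbers, after which your least-differing-index comparison goes through verbatim.
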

%

For $x\in\T$ with canonical representation (\ref{canonical:repr}), we define
\begin{itemize}
\item[$\bullet$] $supp(x) = \{n\in \N: c_n \neq 0\}$,
\item[$\bullet$] $supp^q(x)=\{n\in\N\ : \ c_n=q_n-1\}$.
\end{itemize}
For an arithmetic sequence $(a_n)$, note that for each $j\in\N$,
\begin{equation}\label{eqsum}
\sum\limits_{i=j}^\infty \frac{c_i}{a_i} \leq \sum\limits_{i=j}^\infty \frac{q_i-1}{a_i} = \sum\limits_{i=j}^\infty \bigg(\frac{1}{a_{i-1}} - \frac{1}{a_i} \bigg) \leq \frac{1}{a_{j-1}}.
\end{equation}
\begin{lemma}\label{uconlmain}\cite[Lemma 3.1]{DR}
For $x\in\T$ with canonical representation (\ref{canonical:repr}), for every natural $n > 1$ and every non-negative integer $t$,
\begin{equation}\label{eqlemain1}
\{a_{n-1}x\}=\frac{c_n}{q_n}+\frac{c_{n+1}}{q_nq_{n+1}}+\ldots+\frac{c_{n+t}}{q_n\ldots q_{n+t}}+\frac{\{a_{n+t}x\}}{q_n\ldots q_{n+t}}.
\end{equation}
In particular, for $t=1$, we get
\begin{equation}\label{eqlemain2}
\{a_{n-1}x\}=\frac{c_n}{q_n}+\frac{c_{n+1}}{q_nq_{n+1}}+\frac{\{a_{n+1}x\}}{q_nq_{n+1}}.
\end{equation}
\end{lemma}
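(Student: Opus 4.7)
The plan is to derive the formula directly from the canonical representation in Fact~\ref{lemmanew} by multiplying through by $a_{n-1}$ and then carefully tracking integer contributions versus fractional contributions.

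First, I would write $a_{n-1}x = \sum_{i=1}^{\infty}\frac{a_{n-1}c_i}{a_i}$. Since $(a_n)$ is an arithmetic sequence we have $a_i \mid a_{n-1}$ whenever $i<n$, so each term with index $i<n$ is an integer and may be discarded modulo $1$. For $i\geq n$ one has $a_i/a_{n-1}=q_nq_{n+1}\cdots q_i$, so modulo $1$
\[
a_{n-1}x \equiv \sum_{i=n}^{\infty}\frac{c_i}{q_nq_{n+1}\cdots q_i}.
\]

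The main (and really only non-routine) step is to verify that this tail series actually lies in $[0,1)$, so that it truly coincides with $\{a_{n-1}x\}$ rather than with $\{a_{n-1}x\}$ plus some extra integer. Using the bound $c_i\leq q_i-1$ together with the telescoping identity $\frac{q_i-1}{q_n\cdots q_i}=\frac{1}{q_n\cdots q_{i-1}}-\frac{1}{q_n\cdots q_i}$ (as already exploited in (\ref{eqsum})), the partial sums telescope to something bounded above by $1$; the strict inequality $<1$ comes precisely from the non-degeneracy clause in Fact~\ref{lemmanew}, namely that $c_i<q_i-1$ for infinitely many $i$. This is the only place where the canonical representation hypothesis is genuinely used, and I expect it to be the delicate step to present cleanly.

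Once the identification $\{a_{n-1}x\}=\sum_{i=n}^{\infty}\frac{c_i}{q_n\cdots q_i}$ is established, the claim for general $t$ follows by splitting the series at index $n+t$:
\[
\{a_{n-1}x\}=\sum_{i=n}^{n+t}\frac{c_i}{q_n\cdots q_i}+\frac{1}{q_n\cdots q_{n+t}}\sum_{j=n+t+1}^{\infty}\frac{c_j}{q_{n+t+1}\cdots q_j},
\]
and recognizing the inner tail, by the same argument applied with $n+t+1$ in place of $n$, as exactly $\{a_{n+t}x\}$. Substituting yields equation (\ref{eqlemain1}), and specializing to $t=1$ gives (\ref{eqlemain2}).
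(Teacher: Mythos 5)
Your argument is correct and complete: multiplying the canonical representation by $a_{n-1}$, discarding the integer terms with $i<n$, and identifying the tail $\sum_{i\ge n} c_i/(q_n\cdots q_i)$ with $\{a_{n-1}x\}$ via the telescoping bound and the clause ``$c_i<q_i-1$ infinitely often'' (which forces the tail to be $<1$) is exactly the standard proof of this lemma, and the splitting at index $n+t$ then gives (\ref{eqlemain1}) and hence (\ref{eqlemain2}). The paper itself does not reprove the statement but cites \cite[Lemma 3.1]{DR}, and your derivation matches that argument in substance, with the delicate point (strict inequality in the tail estimate) correctly identified and justified.
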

The following facts are well known for integer norm:
\begin{eqnarray}\label{eqinorm1}
&\bullet& \ \mbox{for any } \ A\subseteq\N\ \mbox{ and }\ y_n=x_n+z_n \ \mbox{ if } \ \lim\limits_{n\in A} z_n=0, \ \mbox{ then } \  \lim\limits_{n\in A}\|y_n\|=\lim\limits_{n\in A}\|x_n\|. \\
&\bullet& \ \mbox{if } (x_n)  \mbox{ is  bounded  with } x_n\geq 0\mbox{ for all }n\in A \mbox{ and } \lim_{n\in A} \|y_n\|=0, \mbox{ then } \lim_{n\in A}\|x_ny_n\|=0.\\
&\bullet& \ \mbox{for any integer } \ n\ \mbox{ and } \ \mbox{for any real number } x, \|n+x\|=\|x\|.
\end{eqnarray}
Also note that for any $r\in\N$,
\begin{equation}\label{eqr}
\bullet \ \mbox{if } \ r\{a_nx\}<1\ \mbox{ then } \ \{ra_nx\}=r\{a_nx\}.
\end{equation}
\begin{equation}\label{eqrn}
\bullet \ \mbox{if } \ r\|a_nx\|<\frac{1}{2} \ \mbox{ then } \ \|ra_nx\|=r\|a_nx\|.
\end{equation}
\begin{remark}\label{uconr1}
Since $(a_n)$ is a subsequence of $(e_n)$, we can write $a_k=e_{n_k}$. More precisely, we can write $e_{(n_{k-1}+i-1)}=r^k_ia_{k-1}$ where $1\leq i \leq n_{k}-n_{k-1}$ and $r^k_i\in [1,q_{k}-1]$ with $r^k_1=1$. In particular, if $(e_n)=(d_n)$ then $n_{k}-n_{k-1}=q_{k}-1$ and $d_{(n_{k-1}+i-1)}=ia_{k-1}.$ We simply denote $r^k_i$ by $r(e)$ when there is no confusion in the indexing.
\end{remark}
\begin{remark}\label{uconr2}
Let $a_k=e_{n_k}$ and $A\subseteq \N$. We define
$$
L(A)=\bigcup\limits_{k\in A} [n_{k-1},n_{k}-1].
$$
Then, in view of Remark \ref{uconr1}, for any subsequence $(e_n)_{n\in B}$ of $(e_n)$ there exists a unique $A\subseteq\N$ such that $B\subseteq L(A)$ and if $A_1\subsetneq A$ then $B\not\subseteq L(A_1)$.
\end{remark}

\section{Cardinality related observations. \vspace{.3cm} \\}
In this section, we explore the connection between the torsion subgroups of the circle group $\T$ and the characterized subgroups associated with arithmetic-type sequences. Theorems \ref{thuncoun1} and \ref{thcountabletor} play a central role in establishing this relationship. Before presenting the proofs of these theorems, we recall a characterization of topologically torsion elements corresponding to arithmetic-type sequences, as established in \cite{DGT1}. This result will be repeatedly utilized throughout the article.

\begin{theorem}\label{main theorem DGT1}\cite[Theorem 2.5]{DGT1}
Suppose $x\in\T$ and $(e_n)$ is an arithmetic-type sequence. Then $x\in t_{(e_n)}(\mathbb{T})$ if and only if $supp(x)$ is finite or if $supp(x)$ is infinite and for each infinite $A\subseteq \mathbb{N}$ the following holds:
\begin{itemize}
\item[$(a)$] If $A$ is $q$-bounded then :
\begin{itemize}
\item[$(a1)$]  If $A\subset^*supp(x)$ then $(A+1)\subset^* supp(x)$, $A\subset^*supp^q(x)$ and $\lim\limits_{n\in A}$$\frac{c_{n+1}+1}{q_{n+1}}=1$.\\
Moreover, if $A+1$ is $q$-bounded then $(A+1)\subset^*supp^q(x)$.
\item[$(a2)$] If $A\cap supp(x)$ is finite then $\lim\limits_{n\in A}\frac{c_{n+1}}{q_{n+1}}=0$.\\
Moreover, if $A+1$ is $q$-bounded then $(A+1)\cap supp(x)$ is finite.
\end{itemize}
\item[$(b)$]  If $A$ is $q$-divergent then :
\begin{itemize}
\item[$(b1)$] If $(A+1)\subset^* supp(x)$ and
\begin{itemize}
\item[$(i)$] if $A+1$ is $q$-bounded then $\lim\limits_{n\in A}\|\frac{r_{n}(e)}{q_n}(c_{n}+1)\|=0$.
\item[$(ii)$] if $A+1$ is $q$-divergent then $\lim\limits_{n\in A}\|\frac{r_{n}(e)}{q_n}(c_{n}+\frac{c_{n+1}}{q_{n+1}})\|=0$.
\end{itemize}
\item[$(b2)$] If $(A+1)\cap supp(x)$ is finite then $\lim\limits_{n\in A}\|\frac{r_{n}(e)c_n}{q_n}\|=0$.	
\end{itemize}
\end{itemize}
\end{theorem}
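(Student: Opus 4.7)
The plan is to rewrite $\|e_m x\|$ using the block structure of $(e_n)$ supplied by Remark \ref{uconr1} and then apply Lemma \ref{uconlmain}. Every index $m$ lies in a unique block $[n_{k-1}, n_k - 1]$, so $e_m = r a_{k-1}$ with $r = r_i^k \in [1, q_k - 1]$, and (\ref{eqlemain2}) gives
\[
\{a_{k-1} x\} = \frac{c_k}{q_k} + \frac{c_{k+1}}{q_k q_{k+1}} + \frac{\{a_{k+1} x\}}{q_k q_{k+1}},
\]
so $\|e_m x\|$ is controlled by $\bigl\|(r/q_k)\bigl(c_k + c_{k+1}/q_{k+1} + \{a_{k+1}x\}/q_{k+1}\bigr)\bigr\|$. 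By Remark \ref{uconr2}, testing $e_n x \to 0$ reduces to examining, for every infinite $A \subseteq \N$ of block indices and every admissible choice of $r$ in each block, whether the corresponding norms tend to $0$. The cases in the statement are then dictated by the joint behaviour of $(q_k)_{k\in A}$ and $(q_{k+1})_{k\in A}$ and by how $A$ sits with respect to $\supp(x)$.

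For necessity, since $(a_k) \subseteq (e_n)$, the hypothesis $e_n x \to 0$ immediately forces $a_{k-1} x \to 0$ (the case $r = 1$). On a $q$-bounded $A$ the digit $c_k$ lives in a finite set, and $\|a_{k-1} x\| \to 0$ read through (\ref{eqlemain2}) and the tail bound (\ref{eqsum}) yields a clean dichotomy: either $c_k = q_k - 1$ eventually on $A$ with $(c_{k+1}+1)/q_{k+1} \to 1$ (giving (a1)), or $c_k = 0$ eventually with $c_{k+1}/q_{k+1} \to 0$ (giving (a2)); the parallel statements for $A+1$ arise by re-running the argument along $A+1$. On a $q$-divergent $A$, the ratios $r/q_k$ range through an essentially dense subset of $[0,1)$, and after absorbing the tail $\{a_{k+1}x\}/(q_k q_{k+1})$ (negligible by (\ref{eqsum})) one reads off (b1) and (b2). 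The sub-split on whether $A+1$ is $q$-bounded records whether applying (a1) along $A+1$ collapses $c_{k+1}$ to $q_{k+1} - 1$, so that $c_k + c_{k+1}/q_{k+1}$ simplifies to $c_k + 1 - 1/q_{k+1}$, or whether it must be kept in its general form.

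For sufficiency, given any $m_j \to \infty$, extract a subsequence along which the block indices $k_j$ fall entirely into a single case among (a1), (a2), (b1)(i), (b1)(ii), (b2), with the resulting $A \subseteq \N$ and $A+1$ consistently $q$-bounded or $q$-divergent. In each case the assumed condition, combined with (\ref{eqlemain2}), (\ref{eqinorm1}), and (\ref{eqrn}), directly yields $\|e_{m_j} x\| \to 0$.

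The main technical obstacle is case (b1)(ii), where both $c_k$ and $c_{k+1}/q_{k+1}$ can remain bounded away from $0$ and $1$ while $r/q_k$ varies over a large subset of $[0,1)$. One must control, uniformly in $r \le q_k - 1$, the error term $r \{a_{k+1} x\}/(q_k q_{k+1})$; by (\ref{eqsum}) it is at most $r/(a_k q_k q_{k+1}) \le 1/(a_k q_{k+1}) \to 0$, so the question reduces exactly to $\bigl\|(r/q_k)(c_k + c_{k+1}/q_{k+1})\bigr\| \to 0$, which is the hypothesis. In contrast to the $q$-bounded cases, one cannot apply (\ref{eqrn}) naively because $r/q_k$ need not be small; this is what makes the $q$-divergent analysis substantially more delicate than its $q$-bounded counterpart.
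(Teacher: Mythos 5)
A preliminary remark: the paper does not prove this statement at all (it is quoted from \cite[Theorem 2.5]{DGT1}), so your sketch has to stand on its own, and as written it has genuine gaps. The most concrete one is the error estimate on which your whole reduction rests. From (\ref{eqlemain2}), $r\{a_{k-1}x\}=\frac{r}{q_k}\big(c_k+\frac{c_{k+1}}{q_{k+1}}\big)+\frac{r\{a_{k+1}x\}}{q_kq_{k+1}}$, and (\ref{eqsum}) only gives $\{a_{k+1}x\}\le 1$, not $\{a_{k+1}x\}\le 1/a_k$; hence the discarded term is bounded by $1/q_{k+1}$, not by $1/(a_kq_{k+1})$, and it is negligible only when $A+1$ is $q$-divergent. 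Your bound would make it negligible in all cases, which would lead to the condition $\lim_{n\in A}\|\frac{r_n(e)}{q_n}(c_n+\frac{c_{n+1}}{q_{n+1}})\|=0$ even when $A+1$ is $q$-bounded, i.e.\ it erases exactly the distinction between (b1)(i) and (b1)(ii) that the theorem draws. Relatedly, in (b1)(i) your proposed simplification of $c_k+c_{k+1}/q_{k+1}$ to $c_k+1-1/q_{k+1}$ differs from the stated quantity $c_k+1$ by $\frac{r}{q_kq_{k+1}}$, which can be close to $\frac12$ when $q_{k+1}$ is bounded and $r$ is comparable to $q_k$. To reach $c_k+1$ one must show $\{a_{k+1}x\}\to 1$ along $A$, using the $q$-bounded conditions along $A+1$ (e.g.\ $c_{k+1}=q_{k+1}-1$ eventually and $(c_{k+2}+1)/q_{k+2}\to 1$, which give $\{a_{k+1}x\}\ge \frac12-o(1)$ and hence $\{a_{k+1}x\}\to 1$ because $\|a_{k+1}x\|\to 0$); this step is absent from your plan.

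The second gap is the sufficiency half, which you declare ``direct''. After extracting a subsequence confined to one of the five cases, the hypotheses control only one level of the expansion: for a $q$-bounded $A\subseteq supp(x)$, (a1) gives $c_k=q_k-1$ eventually and then $1-\{a_{k-1}x\}=\frac{1}{q_k}\big(\frac{q_{k+1}-c_{k+1}-1}{q_{k+1}}+\frac{1-\{a_{k+1}x\}}{q_{k+1}}\big)$; the first summand tends to $0$ by (a1), but the second is only bounded by $\frac{1}{q_kq_{k+1}}\le\frac14$, so $\|a_{k-1}x\|\to 0$ does not follow in one step --- a recursive estimate over successive indices is needed (this is already the hard half of the arithmetic case in \cite{DI1}), and in the arithmetic-type case one must in addition handle every admissible multiplier $r_n(e)$, whose set within a block is an arbitrary subset of $[1,q_k-1]$ containing $1$; your remark that the ratios $r/q_k$ are ``essentially dense in $[0,1)$'' is false in general (for $(e_n)=(a_n)$ the only multiplier is $r=1$). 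Likewise the ``clean dichotomy'' you invoke ($c_k=q_k-1$ eventually or $c_k=0$ eventually) holds only after $A$ has been assumed almost contained in, or almost disjoint from, $supp(x)$; for a general $q$-bounded $A$ it fails, although the conditional formulation of the theorem saves you there. In short, the skeleton (block decomposition via Remarks \ref{uconr1}--\ref{uconr2}, Lemma \ref{uconlmain}, subsequence extraction) is the right one, but the two points above are exactly where the proof must do real work, and the sketch does not supply them.
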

Consider $x\in\T$ with $supp(x)=A$. We set $\mathcal{G}_A(\T)=\{y\in\T: supp(y)\subseteq A$ and $c_n(y)=c_n(x)$ for each $n\in supp(y)\}$. Our next result plays a prominent role in proving one of the main result of this section namely, Theorem \ref{thuncoun1}.
\begin{lemma}\label{leuncoun1}
Consider $x\in\T$ with $supp(x)=A$. If $A$ is an infinite subset of $\N$ then $|\mathcal{G}_A(\T)|=\mathfrak{c}$.
\end{lemma}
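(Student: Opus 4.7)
The plan is to set up an explicit bijection between the power set $2^A$ and $\mathcal{G}_A(\T)$. The key observation is that any element of $\mathcal{G}_A(\T)$ is completely determined by which positions in $A$ appear in its support; once those positions are chosen, the coefficients at those positions are forced to match those of $x$, and the remaining coefficients must be $0$.

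Concretely, for each $B \subseteq A$ I would define
\[
y_B := \sum_{n\in B}\frac{c_n(x)}{a_n},
\]
and first check that this series gives a canonical representation in the sense of Fact \ref{lemmanew}. Convergence is immediate since the series is dominated by the representation of $x$. The coefficient assignment is $c_n(y_B) := c_n(x)$ for $n \in B$ and $c_n(y_B) := 0$ otherwise; each such coefficient lies in $[0,q_n-1]$. For canonicity I need $c_n(y_B) < q_n - 1$ for infinitely many $n$. If $\N\setminus B$ is infinite, this is automatic (infinitely many coefficients are $0$ and $q_n \geq 2$); in the remaining case $B$ is cofinite in $\N$, so $A$ is cofinite, and I can borrow the canonicity of $x$'s expansion to produce infinitely many $n \in B$ with $c_n(x) < q_n - 1$.

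Next I would verify that $y_B \in \mathcal{G}_A(\T)$ with $supp(y_B) = B$. By uniqueness of the canonical representation (Fact \ref{lemmanew}), the support of $y_B$ is exactly the set of indices on which $c_n(y_B) \neq 0$, which is $\{n \in B : c_n(x) \neq 0\} = B \cap A = B$, using $B \subseteq A = supp(x)$. At each $n \in B$ we have $c_n(y_B) = c_n(x)$ by construction, so $y_B$ satisfies the defining conditions of $\mathcal{G}_A(\T)$.

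Finally, the map $B \mapsto y_B$ is injective by uniqueness of canonical representation (distinct $B$'s produce distinct supports, hence distinct elements), and surjective because any $y \in \mathcal{G}_A(\T)$ equals $y_{supp(y)}$ by its very definition. Since $A$ is a countably infinite subset of $\N$, $|2^A| = \mathfrak{c}$, and the ambient inclusion $\mathcal{G}_A(\T) \subseteq \T$ gives the matching upper bound. I do not anticipate a real obstacle here; the only mildly delicate step is checking canonicity of the representation of $y_B$ in the edge case where $B$ is cofinite, which is disposed of by the canonicity clause in Fact \ref{lemmanew} applied to $x$ itself.
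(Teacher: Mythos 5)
Your proof is correct and rests on essentially the same idea as the paper's: elements of $\mathcal{G}_A(\T)$ are determined by their supports, which range over subsets of the infinite set $A$, so varying the support produces $\mathfrak{c}$ many distinct elements, with the upper bound coming from $\mathcal{G}_A(\T)\subseteq\T$. The only difference is cosmetic: the paper injects $\{0,1\}^\N$ via a sparse family of supports $B^\delta\subseteq A$ (which silently avoids the cofinite canonicity issue you check explicitly), while you exhibit the full bijection of $\mathcal{G}_A(\T)$ with $2^A$.
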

\begin{proof}
Let us write $A=\{l_1<l_2<\ldots<l_k<\ldots\}$. We fix a sequence $(\delta_i)\in \{0,1\}^\N$ and define $B^\delta = \bigcup\limits_{k=1}^{\infty}l_{2k+ \delta_k}$ (it is to be noted that $B^\delta $ is actually obtained from $A$ by taking at each stage $k$ either $l_{2k}$ or $l_{2k+1}$ depending on the choice imposed by $(\delta_i)$). Clearly $B^\delta \ne B^\gamma$ for distinct $\delta, \gamma \in \{0,1\}^\N$. Consequently, this provides an injective map given by
$$
\{0,1\}^\N \ni \delta \to B^\delta.
$$
Now, for each $x_\delta\in\T$ with $supp (x_\delta)=B^\delta$ and $c_n(x_\delta)=c_n(x)$ for each $n\in supp(x_\delta)$, we have $x_\delta\in G_A(\T)$. Since $|\{0,1\}^\N| = \mathfrak c$, it readily follows that $|\mathcal{G}_A(\T)|=\mathfrak{c}$.
\end{proof}
The following result demonstrates that if the characterized subgroup associated with an arithmetic-type sequence contains even a single non-torsion element, then it necessarily contains uncountably many such elements.
\begin{theorem}\label{thuncoun1}
Suppose $x\in t_{(e_n)}(\T)$ and $supp(x)$ is infinite. Then $|t_{(e_n)}(\T)|=\mathfrak{c}$.
\end{theorem}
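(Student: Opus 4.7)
Set $A := \supp(x)$, which is infinite by hypothesis. Lemma \ref{leuncoun1} gives $|\mathcal{G}_A(\T)| = \mathfrak c$, and since $|t_{(e_n)}(\T)| \leq |\T| = \mathfrak c$, it suffices to show that $\mathfrak c$-many elements of $\mathcal{G}_A(\T)$ lie in $t_{(e_n)}(\T)$; my plan is to aim for the full inclusion $\mathcal{G}_A(\T) \subseteq t_{(e_n)}(\T)$, falling back to an explicit subfamily of size $\mathfrak c$ if needed.

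Given $y \in \mathcal{G}_A(\T)$ with $B := \supp(y) \subseteq A$, I would verify $y \in t_{(e_n)}(\T)$ via Theorem \ref{main theorem DGT1}. If $B$ is finite then $y$ is rational with denominator dividing some $a_N$ and $e_m y \equiv 0$ in $\T$ eventually by Remark \ref{uconr1}, so $y \in t_{(e_n)}(\T)$; so assume $B$ infinite. The coefficient-level conclusions of cases (a1)--(b2) --- limits such as $(c_{n+1}(y)+1)/q_{n+1}\to 1$, $c_{n+1}(y)/q_{n+1}\to 0$, and $\|r_n(e)c_n(y)/q_n\|\to 0$ --- transfer from the corresponding conclusions Theorem \ref{main theorem DGT1} enforces on $x$, since $c_n(y)\in\{c_n(x),0\}$: on positions where the coefficients agree the limits coincide, and on positions where $c_n(y)=0$ the limits vanish trivially. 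The support-level conclusions are the crux: in case (a1) one needs $(C+1)\subset^* B$ whenever $C$ is $q$-bounded with $C\subset^* B$, while Theorem \ref{main theorem DGT1} applied to $x$ only supplies $(C+1)\subset^* A$.

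The key structural observation I would exploit is that if $B$ is a union of maximal runs of consecutive integers inside $A$, then $+1$-closure holds automatically: whenever $n\in C\subset^* B$, (a1) for $x$ gives $n+1\in A$ cofinitely, and then $n+1$ lies in the same maximal run of $A$ as $n$, hence in $B$. Since $x$ is non-torsion (any $q$-bounded tail of $A$ would telescope to make $x$ rational via $\sum_{n\ge n_0}(q_n-1)/a_n = 1/a_{n_0-1}$), $A$ must admit either infinitely many maximal runs --- in which case arbitrary unions of runs provide $\mathfrak c$-many $B$'s, yielding $\mathfrak c$ distinct $y$'s --- or an infinite $q$-divergent tail. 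The main obstacle is the latter case: $A$ is essentially one long run and the combinatorial decomposition is empty, so the blocks must instead be carved out analytically using the rigidity of condition (b1)(ii), $\lim_n\|r_n(e)(c_n+c_{n+1}/q_{n+1})/q_n\|=0$, to identify positions $n_k$ at which coefficients can be independently zeroed out without disturbing the convergence. Binary choices across these analytic blocks would then again supply $\mathfrak c$-many distinct $y\in t_{(e_n)}(\T)$, completing the argument.
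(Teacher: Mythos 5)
Your top-level strategy (invoke Lemma \ref{leuncoun1} and produce $\mathfrak{c}$-many elements of $t_{(e_n)}(\T)$ by varying supports) is the same as the paper's, but the execution has a real gap. First, a smaller issue: your claim that the coefficient-level conclusions of Theorem \ref{main theorem DGT1} ``transfer'' because $c_n(y)\in\{c_n(x),0\}$ is not automatic. When the support shrinks, the clause that applies at a given position can change: a $q$-bounded set $C$ handled under (a1) for $x$ (where the conclusion forces $c_{n+1}(x)/q_{n+1}\to 1$) may fall under (a2) for $y$ (which demands $c_{n+1}(y)/q_{n+1}\to 0$ and, via the ``moreover'' clause, that $(C+1)\cap\supp(y)$ be finite). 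Your union-of-maximal-runs device avoids this conflict only because deleting a position also deletes its successor within the same run, and that needs to be checked clause by clause rather than asserted. Granting that verification, your runs construction plausibly settles the case where $\supp(x)$ is not cofinite (equivalently, has infinitely many maximal runs); the paper handles that case differently, by showing $B=\supp(x)\setminus(\supp(x)-1)$ is $q$-divergent via (a1), extracting from (b2) that $\|r(e)c_n(x)/q_n\|\to 0$ on $B$, and supporting $y$ on arbitrary subsets of $B$ with coefficients $c_n(x)$ or $q_n-c_n(x)$, so that membership follows from \cite[Corollary 3.2]{DGT1}.

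The genuine gap is the cofinite case, which you yourself flag as ``the main obstacle'' and resolve only with a hope (``carve out blocks analytically \ldots zero out coefficients without disturbing the convergence''). The naive version of that plan fails: if you zero out $c_{n+1}$, position $n$ switches from clause (b1) to clause (b2), so you now need $\|r(e)c_n(y)/q_n\|\to 0$, whereas membership of $x$ only controls $\|r(e)(c_n(x)+1)/q_n\|$ or $\|r(e)(c_n(x)+c_{n+1}(x)/q_{n+1})/q_n\|$. The discrepancy is $r(e)\epsilon_n/q_n$ with $\epsilon_n\in[0,1]$, and since $r(e)$ ranges up to $q_n-1$, this is not negligible (e.g.\ $c_n(x)=q_n-1$ gives $\|r(e)(c_n+1)/q_n\|=0$ while $\|r(e)c_n/q_n\|=r(e)/q_n$ can be bounded away from $0$). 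This is precisely where the paper does its hardest work: it passes to $B'\subseteq\supp(x)\setminus\supp^q(x)$ (shown $q$-divergent using (a1), then thinned so that $B'\cap(B'+1)=\emptyset$), rewrites the (b1) information as $\lim_{n\in B'}\|r(e)(c_n(x)+\epsilon_n)/q_n\|=0$ with $\|\epsilon_n\|\to 0$, and then defines $c_n(y)$ as one of $c_n(x)$, $c_n(x)+1$, $q_n-c_n(x)$, $q_n-c_n(x)-1$ (according to the sizes of $c_n(x)/q_n$ and $\epsilon_n$) so that simultaneously $c_n(y)/q_n\to 0$ and $\|r(e)c_n(y)/q_n\|\to 0$; only then does \cite[Corollary 3.2]{DGT1} apply to every element supported on a subset of $B'$, yielding $\mathfrak{c}$-many members via Lemma \ref{leuncoun1}. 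That coefficient adjustment is the missing idea, and without it your argument does not cover non-torsion $x$ with cofinite support.
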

\begin{proof}
Suppose $x\in t_{(e_n)}(\T)$ and $supp(x)$ is an infinite non-cofinite set. Let us write
$$
 B=supp(x)\setminus (supp(x)-1).
$$
Since $supp(x)$ is not cofinite, we must have $B$ is infinite. Also observe that $B\subseteq supp(x)$ and $(B+1)\cap supp(x)=\emptyset$. If possible assume that there exists an infinite set $B_1\subseteq B$ such that $B_1$ is $q$-bounded. Then item $(a1)$ of \cite[Theorem 2.5]{DGT1} entails that $(B_1+1)\subseteq^* supp(x)$ which contradicts the fact that $(B+1)\cap supp(x)=\emptyset$. Indeed we conclude that $B$ is $q$-divergent. As a consequence, item $(b2)$ of \cite[Theorem 2.5]{DGT1} holds true. Therefore, for each $r(e)\in[1,q_{k}-1]$ we have
\begin{equation}\label{eqnoncofin2}
\lim\limits_{n\in B}\|\frac{r(e)c_{n}(x)}{q_n}\|=0.
\end{equation}
In particular, taking $r(e)=1$ we have
$$
\lim\limits_{n\in B}\|\frac{c_{n}(x)}{q_n}\|=0.
$$
Now consider $y\in\T$ such that $supp(y)\subseteq B$ and for each $n\in supp(y)$,
$$
c_n(y) = \begin{cases}
   c_n(x) & \text{for}\ c_n(x)\leq \frac{q_{n}}{2} \\
   q_n- c_n(x) & \text{otherwise}.
  \end{cases}
$$
Then it is evident that $supp(y)\cap (supp(y)+1)=\emptyset$ and
$$
\lim\limits_{n\in supp(y)} \frac{c_n(y)}{q_n} =0.
$$
Note that for each $n\in\N$, we have
\begin{equation}\label{eqnorm}
\|\frac{r(e)(q_n - c_{n}(x))}{q_n}\|=\|\frac{r(e)c_{n}(x)}{q_n}\|.
\end{equation}
Therefore, in view of Eq(\ref{eqnorm}), Eq(\ref{eqnoncofin2}) ensures that
$$
\lim\limits_{n\in supp(y)}\|\frac{r(e)c_{n}(y)}{q_n}\|=\lim\limits_{n\in supp(y)\subseteq B}\|\frac{r(e)c_{n}(x)}{q_n}\|=0.
$$
Since $B$ is $q$-divergent and $supp(y)\subseteq B$, \cite[Corollary 3.2]{DGT1} ensures that $y\in t_{(e_n)}(\T)$. Therefore, for each $A\subseteq B$, we can construct an element $y_A\in\T$ such that $supp(y_A)=A$ with $c_n(y_A)=c_n(y_B)$ for each $n\in supp(y_A)$ and $y_A\in t_{(e_n)}(\T)$. Indeed we conclude that $\mathcal{G}_B(\T)\subseteq t_{(e_n)}(\T)$. In view of Lemma \ref{leuncoun1}, it is evident that $|t_{(e_n)}(\T)|=\mathfrak{c}$.

So, let us consider $supp(x)$ is cofinite. We set
$$
B_0'=supp(x)\setminus supp^q(x).
$$
Since $supp^q(x)$ is not cofinite (see Remark \ref{lemmanew}), we must have $B_0'$ is infinite. Also observe that $B_0'\subseteq supp(x)$ and $B_0' \cap supp^q(x)=\emptyset$. If possible assume that there exists an infinite set $B_1'\subseteq B_0'$ such that $B_1'$ is $q$-bounded. Then item $(a1)$ of \cite[Theorem 2.5]{DGT1} entails that $B_1'\subseteq^* supp^q(x)$ which contradicts the fact that $B_0'\cap supp^q(x)=\emptyset$. Indeed we conclude that $B_0'$ is $q$-divergent. Let us write
$$
B_0'=\{n_k: k\in\N\} \ \mbox{ and set } \ B'=\{n_{2k}:k\in\N\}.
$$
Then $B'$ is again $q$-divergent and $B'\cap (B'+1) =\emptyset$. Since $supp(x)$ is cofinite, we also have $B'+1\subseteq^* supp(x)$. Consequently, item $(b1)$ of \cite[Theorem 2.5]{DGT1} holds true, i.e., if $B'+1$ is $q$-bounded then for each $r(e)\in[1,q_{k}-1]$ we have
\begin{equation}\label{eq2ai2}
\lim\limits_{n\in B'}\|\frac{r(e)}{q_n}(c_{n}(x)+1)\|=0 ,
\end{equation}
and, if $B'+1$ is $q$-divergent then for each $r(e)\in[1,q_{k}-1]$ we have
\begin{equation}\label{eq2aii2}
 \lim\limits_{n\in B'}\|\frac{r(e)}{q_n}(c_{n}(x)+\frac{c_{n+1}(x)}{q_{n+1}})\|=0 \mbox{ and } \lim\limits_{n\in B'}\|\frac{c_{n+1}(x)}{q_{n+1}}\|=0.
\end{equation}
Then Eq(\ref{eq2ai2}) and Eq(\ref{eq2aii2}) entails that there exists $\epsilon_n\in [0,1]$ such that
\begin{equation}\label{eqepsilon}
\lim\limits_{n\in B'}\|\frac{r(e)}{q_n}(c_{n}(x)+\epsilon_n)\|=0 \mbox{ and } \lim\limits_{n\in B'}\|\epsilon_n\|=0.
\end{equation}
In particular, taking $r(e)=1$ in Eq(\ref{eqepsilon}), we have
$$
\lim\limits_{n\in B'}\|\frac{c_{n}(x)}{q_n}\|=\lim\limits_{n\in B'}\|\frac{c_{n}(x)+1}{q_n}\|=0.
$$
Since $B'\cap supp^q(x)=\emptyset$, for each $n\in B'$ we have $c_n(x) \leq q_n-2$. Now consider $y\in\T$ such that $supp(y)\subseteq B'$ and for each $n\in supp(y)$
$$
c_n(y) = \begin{cases}
   q_n-c_n(x) -1 & \text{for}\ c_n(x)\geq \frac{q_{n}}{2} \text{and}\ \epsilon_n\geq \frac{1}{2} \\
   q_n-c_n(x)    & \text{for}\ c_n(x)\geq \frac{q_{n}}{2} \text{and}\ \epsilon_n\leq \frac{1}{2} \\
   c_n(x)        & \text{for}\ c_n(x)\leq \frac{q_{n}}{2} \text{and}\ \epsilon_n\leq \frac{1}{2} \\
   c_n(x)+1      & \text{for}\ c_n(x)\leq \frac{q_{n}}{2} \text{and}\ \epsilon_n\geq \frac{1}{2}.
  \end{cases}
$$
Then it is evident that $supp(y)\cap (supp(y)+1)=\emptyset$ and
$$
\lim\limits_{n\in supp(y)} \frac{c_n(y)}{q_n}=0.
$$
Indeed, in view of Eq(\ref{eqnorm}), Eq(\ref{eqepsilon}) ensures that
$$
\lim\limits_{n\in supp(y)}\|\frac{r(e)c_{n}(y)}{q_n}\|=0.
$$
Therefore, from \cite[Corollary 3.2]{DGT1}, we conclude that $y\in t_{(e_n)}(\T)$. Then proceeding exactly same way as the non-cofinite case, in view of Lemma \ref{leuncoun1}, we conclude that $|t_{(e_n)}(\T)|=\mathfrak{c}$.
\end{proof}
In 1983, J.-P. Borel \cite{B1} demonstrated that every countable subgroup of the circle group $\T$ can be characterized by some increasing sequence of integers, a result that serves as a counterpart to Eggleston’s theorem \cite[Theorem 16]{E}. This naturally leads to the question of whether a smaller subclass of sequences (strictly contained within the class of all increasing sequences of integers) might suffice to characterize all countable subgroups of $\T$. It was shown in \cite{BDBW} that the subgroup $\Q/\Z$ cannot be characterized by any arithmetic sequence. Given that the class of arithmetic-type sequences properly contains the class of arithmetic sequences, and that there exist arithmetic-type sequences which do characterize $\Q/\Z$, it is natural to inquire whether Borel’s result extends to this broader class. Our previous theorem, together with the following lemma, provides a negative answer to this question. In particular, Example \ref{excounter} demonstrates the existence of countable subgroups of $\T$ that cannot be characterized by any arithmetic-type sequence.
\begin{corollary}\label{corotorcountable}
For any arithmetic-type sequence $(e_n)$, $t_{(e_n)}(\T)$ is torsion if and only if it is countable.
\end{corollary}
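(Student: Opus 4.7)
The plan is to derive both implications directly from Theorem \ref{thuncoun1} together with the elementary fact that the torsion subgroup of $\T$ is exactly $\Q/\Z$, which is countable.

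For the forward direction, suppose $t_{(e_n)}(\T)$ is torsion. Then every element of $t_{(e_n)}(\T)$ lies in $t(\T) = \Q/\Z$, so $t_{(e_n)}(\T) \subseteq \Q/\Z$. Since $\Q/\Z$ is countable, so is $t_{(e_n)}(\T)$. This direction requires essentially no work.

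For the reverse direction, assume $t_{(e_n)}(\T)$ is countable. Applying the contrapositive of Theorem \ref{thuncoun1}, since $|t_{(e_n)}(\T)| < \mathfrak{c}$, no element $x \in t_{(e_n)}(\T)$ can have infinite support. Hence every $x \in t_{(e_n)}(\T)$ has finite $\supp(x)$, say $F = \supp(x)$ with $N = \max F$. From the canonical representation (\ref{canonical:repr}),
\[
x = \sum_{i \in F} \frac{c_i}{a_i} = \frac{1}{a_N}\sum_{i \in F} c_i \cdot \frac{a_N}{a_i},
\]
so $a_N x \in \Z$, i.e. $x$ is $a_N$-torsion in $\T$. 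Thus every element of $t_{(e_n)}(\T)$ is torsion, which is the definition of $t_{(e_n)}(\T)$ being a torsion subgroup.

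There is no real obstacle here; the whole content of the corollary is already absorbed into Theorem \ref{thuncoun1}, and the corollary is essentially a repackaging of its contrapositive together with the trivial observation that finite-support elements in the canonical representation are rationals modulo $\Z$. The proof should therefore fit in a few lines.
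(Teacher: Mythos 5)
Your proof is correct and follows essentially the same route as the paper: the forward direction via $t_{(e_n)}(\T)\subseteq t(\T)=\Q/\Z$, and the reverse direction via Theorem \ref{thuncoun1} (you take its contrapositive, the paper argues by contradiction from a non-torsion element, which is the same idea). Your explicit check that finite $\supp(x)$ forces $a_Nx\in\Z$ merely spells out what the paper leaves implicit.
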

\begin{proof}
First we assume that $t_{(e_n)}(\T)$ is torsion. Then $t_{(e_n)}(\T)$ is countable follows from the fact that $t_{(e_n)}(\T)\subseteq t(\T)=\Q/\Z$. Now, on the contrary, assume that $t_{(e_n)}(\T)$ is not torsion. Then there exists $x\in t_{(e_n)}(\T)$ such that $supp(x)$ is infinite. Then Theorem \ref{thuncoun1} ensures that $t_{(e_n)}(\T)$ is uncountable.
\end{proof}
\begin{example}\label{excounter}
Consider any irrational number $\gamma\in (0,1)$. Then Corollary \ref{corotorcountable} ensures that the cyclic subgroup of $\T$ generated by $\gamma$ (i.e., $\langle\gamma + \Z \rangle$) cannot be characterized by arithmetic-type sequences.
\end{example}
\begin{corollary}\label{coronontor}
For any arithmetic-type sequence $(e_n)$, the followings are equivalent:
\begin{itemize}
\item[$(i)$] $t_{(e_n)}(\T)\not= t_{(d_n)}(\T)$.
\item[$(ii)$] $t_{(e_n)}(\T)$ contains non-torsion elements.
\item[$(iii)$] $t_{(e_n)}(\T)$ is uncountable.
\item[$(iv)$] $|t_{(e_n)}(\T)|=\mathfrak{c}$
\item[$(v)$] $|t_{(e_n)}(\T)\setminus t_{(d_n)}(\T)|=\mathfrak{c}$.
\end{itemize}
\end{corollary}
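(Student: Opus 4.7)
The plan is to establish the cyclic chain of implications
$$
(i)\Rightarrow(iv)\Rightarrow(iii)\Rightarrow(ii)\Rightarrow(v)\Rightarrow(i),
$$
in which the links $(iv)\Rightarrow(iii)$ and $(v)\Rightarrow(i)$ are immediate; for the second we use the inclusion $t_{(d_n)}(\T)\subseteq t_{(e_n)}(\T)$ from (\ref{eqincluchar}). All three substantive implications reduce, via Theorem \ref{thuncoun1}, to locating a single element of $t_{(e_n)}(\T)$ with infinite support, combined with the fact, established in \cite{DG8} and recorded in the introduction, that $t_{(d_n)}(\T)$ is a countable torsion subgroup of $\T$.

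For $(i)\Rightarrow(iv)$, using the inclusion (\ref{eqincluchar}) I would pick any $x\in t_{(e_n)}(\T)\setminus t_{(d_n)}(\T)$. Applying Theorem \ref{main theorem DGT1} to the arithmetic-type sequence $(d_n)$ shows that every element of $\T$ with finite support lies in $t_{(d_n)}(\T)$; hence $supp(x)$ must be infinite, and Theorem \ref{thuncoun1} yields $|t_{(e_n)}(\T)|=\mathfrak{c}$. The implication $(iii)\Rightarrow(ii)$ is the simple cardinality observation that $t(\T)=\Q/\Z$ is countable, so an uncountable $t_{(e_n)}(\T)$ cannot be contained in $t(\T)$ and hence must contain non-torsion elements.

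For $(ii)\Rightarrow(v)$, let $x\in t_{(e_n)}(\T)$ be non-torsion. Since every finite truncation $\sum_{n=1}^{N}c_n/a_n$ is rational, the condition $x\notin\Q/\Z$ forces $supp(x)$ to be infinite, and Theorem \ref{thuncoun1} again gives $|t_{(e_n)}(\T)|=\mathfrak{c}$. The countability of the torsion group $t_{(d_n)}(\T)\subseteq\Q/\Z$ then produces $|t_{(e_n)}(\T)\setminus t_{(d_n)}(\T)|=\mathfrak{c}$, which is $(v)$.

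The whole argument is essentially bookkeeping on top of Theorem \ref{thuncoun1}, which does all the structural work. I foresee no genuine obstacle; the only point demanding care is the consistent use of the two auxiliary facts — finite support in the canonical representation implies rational (hence torsion) and membership in every characterized subgroup of arithmetic-type, and $t_{(d_n)}(\T)$ is countable — to close the cycle cleanly.
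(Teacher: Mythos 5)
Your proof is correct and rests on the same pillars as the paper's: Theorem \ref{thuncoun1} (an element of infinite support forces $|t_{(e_n)}(\T)|=\mathfrak{c}$) together with the countability of the torsion group $t_{(d_n)}(\T)$. The only difference is organizational — you run the cycle $(i)\Rightarrow(iv)\Rightarrow(iii)\Rightarrow(ii)\Rightarrow(v)\Rightarrow(i)$ and replace the paper's citation of \cite[Theorem 2.7]{DG8} for $(i)\Rightarrow(ii)$ with the direct (and valid) observation, via Theorem \ref{main theorem DGT1} applied to $(d_n)$, that finite-support elements always lie in $t_{(d_n)}(\T)$.
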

\begin{proof}
$(i) \Rightarrow (ii)$ follows from \cite[Theorem 2.7]{DG8}. $(iii)$ and $(iv)$ are equivalent. $(ii) \Rightarrow (iv)$ follows from Theorem \ref{thuncoun1}. $(iv) \Rightarrow (v)$ follows from \cite[(iii) Corollary 2.4]{DG8}. $(v) \Rightarrow (i)$ is trivial.
\end{proof}
Corollary \ref{corotorcountable} implies that if the characterized subgroup corresponding to an arithmetic-type sequence is countable, then it must be torsion. This naturally raises the question of whether the converse holds namely, whether every countable torsion subgroup of $\T$ can be characterized by some arithmetic-type sequence. The following result affirms this and provides a positive answer to the converse statement.
\begin{theorem}\label{thcountabletor}
Any infinite torsion subgroup of $\T$ can be characterized by an arithmetic-type sequence of integers with bounded ratio.
\end{theorem}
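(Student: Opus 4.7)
My plan is to take $(e_n)$ to be the maximal arithmetic-type sequence $(d_n)$ associated to a well-chosen underlying arithmetic sequence $(a_n)$ built from an exhaustion of $H$ by finite cyclic subgroups. Since $H$ is torsion, $H\subseteq\Q/\Z$, and every finitely generated subgroup of $\Q/\Z$ is finite cyclic (the subgroups of each Pr\"ufer component $\Z(p^\infty)$ form a chain, and a finite direct sum of such is cyclic by the Chinese remainder theorem). Fixing a countable generating set $\{h_i\}_{i\in\N}$ of $H$ and letting $H_n := \langle h_1,\dots,h_n\rangle$, the infiniteness of $H$ lets me pass to a subsequence (and discard initial terms) so that $|H_1|>1$ and $H_n \subsetneq H_{n+1}$ for all $n$. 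Any cyclic subgroup of $\Q/\Z$ of order $m$ coincides with $\langle 1/m+\Z\rangle$, so setting $a_n := |H_n|$ gives an arithmetic sequence with $H = \bigcup_n \langle 1/a_n+\Z\rangle$.

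Next, I form $(d_n)$ from $(a_n)$ via Eq(\ref{nonarithdef}) and put $(e_n) := (d_n)$. By definition this is an arithmetic-type sequence. Its consecutive ratios are either $(r+1)/r$ within a block ($1\le r\le q_k-1$) or $q_k/(q_k-1)$ between blocks, each bounded by $2$, so the bounded-ratio requirement is automatic. The inclusion $H\subseteq t_{(d_n)}(\T)$ is then straightforward: given $x=k/a_N\in H_N$ and $d_n\ge a_N$, write $d_n=r a_{m-1}$ with $1\le r\le q_m-1$; the inequality $d_n<a_m$ forces $m-1\ge N$, so $a_N\mid a_{m-1}\mid d_n$ and $d_n x\in\Z$ for all large $n$, whence $d_n x\to 0$ in $\T$.

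For the converse $t_{(d_n)}(\T)\subseteq H$ I rely on the result from \cite{DG8} (recalled in the introduction) that $t_{(d_n)}(\T)$ is always a countable torsion subgroup of $\T$. Any $x\in t_{(d_n)}(\T)$ is therefore rational; write $x=k/m$ in lowest terms. Since $(a_n)$ is a subsequence of $(d_n)$, $a_n x\to 0$ in $\T$, but $\{a_n x\}$ lies in the finite set $\{0,1/m,\dots,(m-1)/m\}$, so convergence forces $\{a_n x\}=0$ eventually, i.e.\ $m\mid a_n k$. Coprimality of $k,m$ yields $m\mid a_n$ for all large $n$, whence $x\in\langle 1/a_n+\Z\rangle\subseteq H$.

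The principal obstacle is this second inclusion, which pivots entirely on the countability/torsion of $t_{(d_n)}(\T)$ --- an input imported from \cite{DG8}. Without it, one would have to independently rule out topologically torsion elements of infinite order inside $t_{(d_n)}(\T)$, which is substantially more delicate than the structural bookkeeping needed for the rest of the argument. Everything else is routine: the construction of $(a_n)$ uses only the structure of $\Q/\Z$, and both the bounded-ratio property of $(d_n)$ and the forward inclusion are direct computations from the block structure of Eq(\ref{nonarithdef}).
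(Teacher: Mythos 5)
Your proposal is correct, and its skeleton --- exhausting the infinite torsion group $H$ by a strictly increasing chain of finite cyclic subgroups $H_n$, setting $a_n=|H_n|$ to obtain an arithmetic sequence with $H=\bigcup_n\langle 1/a_n+\Z\rangle$, and then characterizing $H$ by a bounded-ratio arithmetic-type sequence built over $(a_n)$ --- is exactly the paper's. Where you diverge is in the choice of sequence and in the proof of the inclusion $t_{(e_n)}(\T)\subseteq H$. You specialize to the maximal sequence $(e_n)=(d_n)$, note that its consecutive ratios are bounded by $2$, and then import from \cite{DG8} the fact (also quoted in the paper's introduction) that $t_{(d_n)}(\T)$ is a countable torsion subgroup; torsion gives rationality of any $x\in t_{(d_n)}(\T)$, and your lowest-terms divisibility argument along the subsequence $(a_n)$ places $x$ in some $H_n\subseteq H$. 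The paper instead takes an \emph{arbitrary} bounded-ratio arithmetic-type sequence $(e_n)$ over $(a_n)$, obtains countability of $t_{(e_n)}(\T)$ from Eggleston's theorem (E1), deduces via its own Theorem \ref{thuncoun1} that every element of $t_{(e_n)}(\T)$ has finite $(a_n)$-support, and identifies such elements with elements of $H$ through the uniqueness of the cyclic subgroup of $\T$ of each order. The trade-off: your route is more elementary once the \cite{DG8} structure result is granted (and that citation is in-scope, since the paper itself invokes \cite{DG8} in Corollary \ref{coronontor} and Theorem \ref{Dicho}), but it exhibits only the single sequence $(d_n)$; the paper's route stays within its own Theorem \ref{thuncoun1} plus Eggleston and yields the slightly stronger conclusion that \emph{every} bounded-ratio arithmetic-type sequence over this $(a_n)$ characterizes $H$. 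Your supporting details (properness of the chain from infiniteness of $H$, arranging $a_1>1$, the block estimate $d_n<a_m$ forcing $a_N\mid d_n$ in the forward inclusion) all check out.
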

\begin{proof}
Let $G$ be an infinite torsion subgroup of $\T$. Therefore, we need to show that there exists an arithmetic-type sequence $(e_n)$ with bounded ratio (i.e., the sequence $(\frac{e_{n+1}}{e_n})$ is bounded) such that $G=t_{(e_n)}(\T)$. Note that any countable subgroup $G$ of $\T$ can be written as the countable increasing union of finitely generated abelian groups, i.e.,
$$
G=\bigcup\limits_{k=1}^\infty H_k, \mbox{ where each } H_k \mbox{ are finitely generated and } H_k\subsetneq H_{k+1}.
$$
Indeed, $H_k$ is torsion and finitely generated for each $k\in\N$. Therefore, we conclude that $H_k$ is a finite cyclic subgroup of $\T$ for each $k\in\N$. Let order of $H_k$ be $a_k$. Then observe that $a_k$ properly divides $a_{k+1}$ for each $k\in\N$, i.e., $(a_k)$ is an arithmetic sequence. Now, consider any arithmetic-type sequence $(e_n)$ corresponding to the arithmetic sequence $(a_n)$ with bounded ratio. Since the ratio sequence $(\frac{e_{n+1}}{e_n})$ is bounded, \cite[Theorem 16]{E} entails that $t_{(e_n)}(\T)$ is countable. Now, consider any $x\in t_{(e_n)}(\T)$. Then Theorem \ref{thuncoun1} ensures that $supp_{(a_n)}(x)$ is finite. Let us write
$$
x=\frac{c_{n_1}}{a_{n_1}}+\frac{c_{n_2}}{a_{n_2}}+ \ldots + \frac{c_{n_i}}{a_{n_i}}.
$$
Since $H_k$ is the unique cyclic subgroup of order $a_k$ for each $k\in\N$, it is easy to see that $x\in H_{n_i}\subseteq G$. Also, note that for each $k\in\N$, $a_k$ divides $e_l$ for each $l\geq n_k$. Therefore, it is clear that $G\subseteq t_{(e_n)}(\T)$. Thus, we conclude that $G=t_{(e_n)}(\T)$.
\end{proof}
\begin{corollary}
Every countable subgroups of $\T$ can be characterized by an arithmetic-type sequence if and only if it is torsion.
\end{corollary}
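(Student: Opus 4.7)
The plan is to deduce this corollary directly from the two main results of the section, namely Corollary \ref{corotorcountable} and Theorem \ref{thcountabletor}; no new construction is required.

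For the forward implication I would fix a countable subgroup $G$ of $\T$ satisfying $G = t_{(e_n)}(\T)$ for some arithmetic-type sequence $(e_n)$. Since $G$ is countable by hypothesis, Corollary \ref{corotorcountable}, which asserts that $t_{(e_n)}(\T)$ is torsion if and only if it is countable, immediately yields that $G$ is torsion.

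For the converse I would start with a countable torsion subgroup $G$ of $\T$. One should note that a finite subgroup cannot arise as $t_{(e_n)}(\T)$, because for every arithmetic-type sequence $(e_n)$ with underlying arithmetic sequence $(a_n)$ the subgroup $\langle 1/a_k + \Z : k \in \N \rangle$ lies inside $t_{(e_n)}(\T)$ and is already infinite; thus the statement is naturally read with $G$ infinite. In that case Theorem \ref{thcountabletor} directly supplies an arithmetic-type sequence with bounded ratio that characterizes $G$, completing the argument.

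I foresee no genuine obstacle: both implications reduce to a one-line invocation of an already-established result, the only subtlety being the (unavoidable) convention that $G$ be infinite when passing from a countable torsion subgroup to a characterizing arithmetic-type sequence.
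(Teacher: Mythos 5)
Your proof is correct and takes essentially the same route as the paper's: the forward direction is an immediate application of Corollary \ref{corotorcountable} and the converse of Theorem \ref{thcountabletor}. Your additional observation that a finite torsion subgroup can never equal $t_{(e_n)}(\T)$ (since every $t_{(e_n)}(\T)$ contains the infinite group generated by the elements $1/a_k+\Z$), so that the converse must be read for infinite $G$, is a genuine subtlety that the paper's own proof passes over silently, because Theorem \ref{thcountabletor} is stated only for infinite torsion subgroups; your handling of it is correct.
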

\begin{proof}
Let $G$ be a countable subgroup of $\T$. First we assume that $G$ is characterized by an arithmetic-type sequence. Then Corollary \ref{corotorcountable} ensures that $G$ is torsion. Conversely, we consider that $G$ is a torsion subgroup of $\T$. Then Theorem \ref{thcountabletor} ensues that $G$ can be characterized by an arithmetic-type sequence.
\end{proof}

\section{Eggleston's dichotomy vs characterized subgroup. \vspace{.3cm} \\}
Eggleston \cite[Theorem 16, Theorem 17]{E} observed that for any increasing sequence of integers $(u_n)$ the asymptotic behavior of the ratio sequence $(\frac{u_{n+1}}{u_n})$ has a strong impact on the size of $t_{(u_n)}(\T)$:
\begin{itemize}
\item[(E1)]  $t_{(u_n)}(\T)$ is countable if $(\frac{u_{n+1}}{u_n})$ is bounded,
\item[(E2)] $|t_{(u_n)}(\T)|=\cont$ if $\frac{u_{n+1}}{u_n} \to \infty$.
\end{itemize}
But the result fails to conclude anything if the ratio sequence $(\frac{u_{n+1}}{u_n})$ is any unbounded sequence. In 2012, D. Dikranjan and D. Impieri had shown that $t_{(a_n)}(\mathbb{T})$ is countable if and only if $(q_n)$ is bounded \cite[Corollary 3.8]{DI1}, i.e., for an arithmetic sequence, Eggleston's result becomes a dichotomy. So, it is natural to ask whether this dichotomy holds for any arithmetic-type sequence of integers. Our next result provides a negative solution in this direction. It also entails that the condition ``bounded ratio", imposed on the arithmetic sequence in Theorem \ref{thcountabletor}, is not necessary.
\begin{theorem}\label{Dicho}
For any arithmetic sequence $(a_n)$ with unbounded ratio, there exist an arithmetic-type sequence $(e_n)$ with unbounded ratio such that $t_{(e_n)}(\T)$ is countable.
\end{theorem}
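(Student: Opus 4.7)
The plan is to start from the fully enriched non-arithmetic sequence $(d_n)$ (whose characterized subgroup is countable torsion but whose consecutive ratios are bounded by $2$) and to delete, at each term of a sparse subsequence of levels $(k_j)$, a symmetric middle block of multipliers. The deletion is engineered to create unbounded ratios while retaining enough symmetry in the surviving multipliers to kill every $x\in t_{(e_n)}(\T)$ of infinite support via Theorem~\ref{main theorem DGT1}; countability of $t_{(e_n)}(\T)$ will then follow from Corollary~\ref{corotorcountable}. Concretely, using the unboundedness of $(q_n)$ I pick indices $k_1<k_2<\cdots$ with $q_{k_j}\geq 4j$ and set $R_k=\{1,2,\ldots,q_k-1\}$ for $k\notin\{k_j\}$ and $R_k=\{1,2,\ldots,q_k-1\}\setminus\bigl[\lceil q_k/(2j)\rceil,\lfloor q_k/2\rfloor\bigr]$ for $k=k_j$. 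Taking $(e_n)$ to be the increasing enumeration of $\{ra_{k-1}:k\in\N,\,r\in R_k\}$, one has $(a_n)\subseteq (e_n)\subseteq (d_n)$ (so $(e_n)$ is arithmetic-type), and at each level $k_j$ the gap between the largest surviving multiplier below ($\approx q_{k_j}/(2j)$) and the smallest above ($\approx q_{k_j}/2$) produces a ratio of about $j$, making $(e_{n+1}/e_n)$ unbounded.

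The crux of the argument will be the following discrimination property of the $R_{k_j}$: because the removed block is symmetric under $r\mapsto q_{k_j}-r$ (apart from the single self-dual point $r=q_{k_j}/2$ when $q_{k_j}$ is even), for every $r\in[1,q_{k_j}-1]$ at least one of $r$ and $q_{k_j}-r$ lies in $R_{k_j}$. Combined with the identity $\|rc/q\|=\|(q-r)c/q\|$, this will yield
\begin{equation*}
\max_{r\in R_{k_j}}\|rc/q_{k_j}\|\geq \tfrac12-O(1/q_{k_j})\quad\text{for every }c\in[1,q_{k_j}-1],
\end{equation*}
the exceptional self-dual case ($q_{k_j}$ even, $c$ odd) being handled by the backup multiplier $r=q_{k_j}/2+1\in R_{k_j}$, which still gives norm $\tfrac12-c/q_{k_j}$.

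To deduce torsion, I would suppose toward contradiction that some $x\in t_{(e_n)}(\T)$ has infinite $S=supp(x)$ and apply Theorem~\ref{main theorem DGT1} along the two-case split used in the proof of Theorem~\ref{thuncoun1}. If $S$ is not cofinite, then $S_2=S\setminus(S-1)$ is infinite; any $q$-bounded infinite $A_0\subseteq S_2$ is ruled out by item (a1) (since $(A_0+1)\cap S=\emptyset$), so $S_2$ is $q$-divergent, and because $q_n\to\infty$ on $S_2$ one has $S_2\subset^*\{k_j\}$. Item (b2) applied to $A=S_2$, combined with the discrimination property, then forces $\max_{r\in R_n}\|rc_n/q_n\|$ to approach $\tfrac12$, contradicting the required limit $0$. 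If instead $S$ is cofinite, I extract $B'\subseteq S\setminus supp^q(x)$ as in the proof of Theorem~\ref{thuncoun1}: it is $q$-divergent, satisfies $B'\cap(B'+1)=\emptyset$ and $(B'+1)\subset^* S$, and item (b1) gives $\lim_{n\in B'}\|\frac{r_n(e)}{q_n}(c_n+\epsilon_n)\|=0$; since on $B'$ the effective numerator $c_n+\epsilon_n$ is a nonzero element of $(0,q_n)$, the discrimination property (extended to real numerators) again bounds the max below by $\tfrac12-o(1)$, a contradiction. Hence every $x\in t_{(e_n)}(\T)$ has finite support, so $t_{(e_n)}(\T)$ is countable, and torsion by Corollary~\ref{corotorcountable}.

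The main obstacle I anticipate is a careful verification of the discrimination property at its two boundary cases: the self-dual point $r=q_{k_j}/2$ (for $q_{k_j}$ even and $c$ odd), and the non-integer effective numerator $c_n+c_{n+1}/q_{n+1}$ arising in item (b1)(ii). Both will be handled by substituting a nearby backup multiplier in $R_{k_j}$ that still achieves norm within $o(1)$ of $\tfrac12$, but the bookkeeping of the $O(1/q_{k_j})$ error terms, uniformly as $q_{k_j}\to\infty$, is where the proof will require the most care.
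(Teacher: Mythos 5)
Your overall plan is the same in spirit as the paper's (the paper deletes the multipliers $[2,\lfloor q_k/3\rfloor]$ at \emph{every} level, so that for each $t$ one of $t$, $q_k-t$ survives, and then contradicts the limit conditions of Theorem \ref{main theorem DGT1}), but your central estimate is false as stated, and this is a genuine gap. The claimed \emph{discrimination property} $\max_{r\in R_{k_j}}\|rc/q_{k_j}\|\ge \tfrac12-O(1/q_{k_j})$ for every integer $c\in[1,q_{k_j}-1]$ fails whenever $\gcd(c,q_{k_j})$ is large: if $3\mid q_{k_j}$ and $c=q_{k_j}/3$, then $\|rc/q_{k_j}\|\in\{0,\tfrac13\}$ for \emph{all} $r$, so even the maximum over the full multiplier set is $\tfrac13$ (similarly $c=q_{k_j}/5$ caps it at $\tfrac25$, etc.). Your proposed repair (the backup multiplier $q_{k_j}/2+1$) only addresses the even-$q$/odd-$c$ self-dual case and does nothing about this failure mode; moreover the bound you quote for it, $\tfrac12-c/q_{k_j}$, is itself useless when $c$ is near $q_{k_j}/2$. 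What the argument actually needs is only a \emph{uniform positive} lower bound, and the way to get it (this is the real core of the paper's proof, absent from your proposal) is not to maximize over a fixed set but to choose a multiplier adapted to the numerator: $t=\lfloor q_n/(5c_n)\rfloor$ when $c_n/q_n\to0$, resp.\ $t=\lfloor q_n/(5(q_n-c_n))\rfloor$ when $c_n/q_n\to1$, note $t<q_n/2$ so that either $t$ or $q_n-t$ lies in $R_n$ by your pair property, and check that $t\theta/q_n$ stays in an interval like $(3/40,\,2/5]$ not only for $\theta=c_n$ but also for the perturbed numerators $c_n+1$ and $c_n+c_{n+1}/q_{n+1}$ coming from item (b1). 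This computation also disposes of your acknowledged ``real numerator'' obstacle, which your gcd-flavoured reasoning cannot reach, since $c_n+c_{n+1}/q_{n+1}$ need not be an integer.

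Two smaller points. First, the assertion $S_2\subset^*\{k_j\}$ is unjustified: $q$-divergence of $S_2$ only says $q_n\to\infty$ along $S_2$, not that $S_2$ eventually lies in your chosen sparse set of levels; this is harmless, because at levels outside $\{k_j\}$ all multipliers of $(d_n)$ survive and the (corrected) lower bound applies a fortiori, but the step should be replaced by that observation rather than the false inclusion. Second, in the cofinite case you should, as in the proof of Theorem \ref{thuncoun1}, first pass to subsequences of $B'$ along which $B'+1$ is $q$-bounded resp.\ $q$-divergent before invoking (b1). With these repairs --- in particular with the adapted multiplier and a concrete constant such as $3/40$ or $1/30$ in place of $\tfrac12-o(1)$ --- your construction (deleting a middle block only at sparse levels $k_j$ with $q_{k_j}\ge 4j$) would indeed give an arithmetic-type sequence with unbounded ratio whose characterized subgroup consists of finite-support, hence torsion, elements, and countability follows as you indicate.
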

\begin{proof}
Let $(a_n)$ be an arithmetic sequence with unbounded ratio. Therefore, there exists $D\subseteq \N$ such that $(q_n)_{n\in D}$ is divergent. We define the sequence $(e_n)$ by the elements of the following set
$$
\bigg\{ra_{k-1} \ : \ r\in \{1\} \cup \bigg[\bigg\lfloor\frac{q_k}{3}\bigg\rfloor+1,q_k-1\bigg]\bigg\}.
$$
Since $(a_n)$ is a subsequence of $(e_n)$ we write $e_{n_k}=a_k$ for each $k\in\N$. Now observe that for each $k\in D-1$, we have
\begin{equation*}
\frac{e_{n_k+1}}{e_{n_k}} = \frac{\bigg(\bigg\lfloor\frac{q_{k+1}}{3}\bigg\rfloor+1\bigg)a_k}{a_k} \geq \frac{q_{k+1}}{3}.
\end{equation*}
Indeed we have $(e_n)$ has unbounded ratio. So it is left to show that $t_{(e_n)}(\T)$ is countable.

First we will show that if $supp(x)$ is infinite then $x\notin t_{(e_n)}(\T)$. On the contrary let us assume that $supp(x)$ is infinite and $x\in t_{(e_n)}(\T)$. Now two cases can arise, i.e., either $supp(x)$ is cofinite or $supp(x)$ is not cofinite.
\begin{itemize}
\item[\underline{Case-1} :] Here we assume that $supp(x)$ is not cofinite. Let us write
$$
A=supp(x)\setminus (supp(x)-1).
$$
Since $supp(x)$ is not cofinite, we must have $A$ is infinite. Also observe that $A\subseteq supp(x)$ and $(A+1)\cap supp(x)=\emptyset$. If possible assume that there exists an infinite set $A_1\subseteq A$ such that $A_1$ is $q$-bounded. Then item $(a1)$ of \cite[Theorem 2.5]{DGT1} entails that $(A_1+1)\subseteq^* supp(x)$ which contradicts the fact that $(A+1)\cap supp(x)=\emptyset$. Indeed we conclude that $A$ is $q$-divergent. As a consequence, item $(b2)$ of \cite[Theorem 2.5]{DGT1} holds true. Therefore, for each $r(e)\in[1,q_{k}-1]$ we have
\begin{equation}\label{eqnoncofin}
\lim\limits_{n\in A}\|\frac{r(e)c_{n}}{q_n}\|=0.
\end{equation}
In particular, taking $r(e)=1$ we have
$$
\lim\limits_{n\in A}\|\frac{c_{n}}{q_n}\|=0, \mbox{ i.e.,}
$$
\begin{itemize}
    \item[(1a)] either there exists an infinite subset $A_1$ of $A$ such that $\lim\limits_{n\in B_1}\frac{c_{n}}{q_n}=0$,
    \item[(1b)] or there exists an infinite subset $A_1'$ of $A$ such that $\lim\limits_{n\in B_2}\frac{c_{n}}{q_n}=1$.
\end{itemize}
\begin{itemize}
\item[\underline{Case-1a} :] First we consider $\lim\limits_{n\in A_1}\frac{c_{n}}{q_n}=0$. Now, for $\epsilon\in (0,\frac{1}{8})$ there exists $n_\epsilon$ such that for each $n\in A_1\setminus [1,n_\epsilon]$, we have
$$
0<\frac{c_{n}}{q_{n}}<\epsilon \mbox{    (since $c_n\geq 1$)}.
$$
For each $n\in A_1\setminus [1,n_\epsilon]$, we set
\begin{equation}\label{eqA1t}
t=\bigg\lfloor \frac{q_{n}}{5c_{n}}\bigg\rfloor,\ \mbox{ i.e., }\ \frac{q_{n}}{5c_{n}}-1<t \leq \frac{q_{n}}{5c_{n}}
\end{equation}
and,
$$
r(e)=\begin{cases}
   t & \text{for}\ t\in\big[\big\lfloor\frac{q_{k}}{3}\big\rfloor+1,q_{k}-1\big]\\
   q_k-t & \text{for}\ t\notin\big[\big\lfloor\frac{q_{k}}{3}\big\rfloor+1,q_{k}-1\big].
  \end{cases}
$$
Then Eq (\ref{eqA1t}) entails that
$$
\frac{3}{40}<\frac{1}{5}-\frac{c_{n}}{q_{n}}<\frac{tc_{n}}{q_{n}} \leq \frac{1}{5}.
$$
Therefore, observe that for each $n\in A_1\setminus [1,n_\epsilon]\subseteq A$,
$$
\big\|\frac{r(e) c_n}{q_n}\big\|= \big\|\frac{(q_n - t)c_n}{q_n}\big\|=\big\|\frac{t c_n}{q_n}\big\|\geq \frac{3}{40}
$$
$-$ which is a contradiction (in view of Eq (\ref{eqnoncofin})).
\item[\underline{Case-1b} :] Let us consider $\lim\limits_{n\in A_1'}\frac{c_{n}}{q_n}=1$, i.e., $\lim\limits_{n\in A_1'}\frac{q_n - c_n}{q_n}=0$. Now, for $\epsilon\in (0,\frac{1}{8})$ there exists $n_\epsilon$ such that for each $n\in A_1'\setminus [1,n_\epsilon]$, we have
$$
0<\frac{q_n- c_{n}}{q_{n}}<\epsilon \mbox{    (since $1\leq c_n\leq q_n-1$)}.
$$
For each $n\in A_1'\setminus [1,n_\epsilon]$, we set
\begin{equation}\label{eqB1t}
t=\bigg\lfloor \frac{q_{n}}{5(q_n-c_{n})}\bigg\rfloor,\ \mbox{ i.e., }\ \frac{q_{n}}{5(q_n-c_{n})}-1<t \leq \frac{q_{n}}{5(q_n-c_{n})}
\end{equation}
and,
$$
r(e)=\begin{cases}
   t & \text{for}\ t\in\big[\big\lfloor\frac{q_{k}}{3}\big\rfloor+1,q_{k}-1\big]\\
   q_k-t & \text{for}\ t\notin\big[\big\lfloor\frac{q_{k}}{3}\big\rfloor+1,q_{k}-1\big].
  \end{cases}
$$
Then Eq (\ref{eqB1t}) entails that
$$
\frac{3}{40}<\frac{1}{5}-\frac{q_n-c_{n}}{q_{n}}<\frac{t(q_n-c_{n})}{q_{n}} \leq \frac{1}{5}.
$$
Therefore, observe that for each $n\in A_1'\setminus [1,n_\epsilon]\subseteq A$,
$$
\big\|\frac{r(e) c_n}{q_n}\big\|= \big\|\frac{r(e)(q_n-c_{n})}{q_n}\big\|= \big\|\frac{(q_n - t)(q_n-c_{n})}{q_n}\big\|=\big\|\frac{t (q_n-c_{n})}{q_n}\big\|\geq \frac{3}{40}.
$$
$-$ which is a contradiction (in view of Eq (\ref{eqnoncofin})).
\end{itemize}
So, we conclude that Case-1 is not possible.
\item[\underline{Case-2} :] Now, let us assume that $supp(x)$ is cofinite. We write
$$
B=supp(x)\setminus supp^q(x).
$$
Since $supp^q(x)$ is not cofinite (see Remark \ref{lemmanew}), we must have $B$ is infinite. Also observe that $B\subseteq supp(x)$ and $B \cap supp^q(x)=\emptyset$. If possible assume that there exists an infinite set $B_1\subseteq B$ such that $B_1$ is $q$-bounded. Then item $(a1)$ of \cite[Theorem 2.5]{DGT1} entails that $B_1\subseteq^* supp^q(x)$ which contradicts the fact that $B\cap supp^q(x)=\emptyset$. Indeed we conclude that $B$ is $q$-divergent. Since $supp(x)$ is cofinite, we also have $B+1\subseteq^* supp(x)$. Consequently, item $(b1)$ of \cite[Theorem 2.5]{DGT1} holds true, i.e., if $B+1$ is $q$-bounded then for each $r(e)\in[1,q_{k}-1]$ we have
\begin{equation}\label{eq2ai}
\lim\limits_{n\in B}\|\frac{r(e)}{q_n}(c_{n}+1)\|=0 ,
\end{equation}
and, if $B+1$ is $q$-divergent then for each $r(e)\in[1,q_{k}-1]$ we have
\begin{equation}\label{eq2aii}
 \lim\limits_{n\in B}\|\frac{r(e)}{q_n}(c_{n}+\frac{c_{n+1}}{q_{n+1}})\|=0 .
\end{equation}
In particular, taking $r(e)=1$ in item $(b1)$ of \cite[Theorem 2.5]{DGT1}, we have
$$
\lim\limits_{n\in B}\|\frac{c_{n}}{q_n}\|=\lim\limits_{n\in B}\|\frac{c_{n}+1}{q_n}\|=0 , \mbox{ i.e.,}
$$
\begin{itemize}
    \item[(2a)] either there exists an infinite subset $B_1$ of $B$ such that $\lim\limits_{n\in B_1}\frac{c_{n}+1}{q_n}=0$,
    \item[(2b)] or there exists an infinite subset $B_1'$ of $B$ such that $\lim\limits_{n\in B_1'}\frac{c_{n}+1}{q_n}=1$.
\end{itemize}
\begin{itemize}
\item[\underline{Case-2a} :] First we consider $\lim\limits_{n\in B_1}\frac{c_{n}+1}{q_n}=0$. For $\epsilon\in (0,\frac{1}{8})$ there exists $n_\epsilon$ such that for each $n\in B_1\setminus [1,n_\epsilon]$, we have
$$
0<\frac{c_{n}}{q_{n}}<\frac{c_{n}+1}{q_{n}}<\epsilon \mbox{    (since } c_n\geq 1).
$$
Let $B+1$ is $q$-bounded. For each $n\in B_1\setminus [1,n_\epsilon]$, we set $r(e)$ exactly as in Case-1a. Then Eq (\ref{eqA1t}) entails that
$$
\frac{3}{40}=\frac{1}{5}-\frac{1}{8}<\frac{c_n+1}{5c_n}-\frac{c_n+1}{q_{n}}<\frac{t(c_n+1)}{q_{n}} \leq \frac{c_n+1}{5c_n}\leq \frac{2}{5}.
$$
Now observe that for each $n\in B_1\setminus [1,n_\epsilon]\subseteq B$,
$$
\big\|\frac{r(e) (c_n+1)}{q_n}\big\|= \big\|\frac{(q_n - t)(c_n+1)}{q_n}\big\|=\big\|\frac{t (c_n+1)}{q_n}\big\|\geq \frac{3}{40}
$$
$-$ which is a contradiction (in view of Eq (\ref{eq2ai})).\\
So, let us consider $B+1$ is $q$-divergent. Therefore, we also have
\begin{equation}\label{eqnormequal}
\lim\limits_{n\in B}\|\frac{c_{n+1}}{q_{n+1}}\|=0 .
\end{equation}
For each $n\in B_1\setminus [1,n_\epsilon]$, we set $r(e)$ exactly as in Case-1a. Then Eq (\ref{eqA1t}) entails that
$$
\frac{3}{40}<\frac{1}{5}-\frac{c_{n}}{q_{n}}<\frac{t c_n}{q_{n}}\leq\frac{t(c_n+\frac{c_{n+1}}{q_{n+1}})}{q_{n}}\leq \frac{t(c_n+1)}{q_{n}} \leq \frac{c_n+1}{5c_n}\leq \frac{2}{5}.
$$
Now, in view of Eq (\ref{eqnormequal}), observe that for each $n\in B_1\setminus [1,n_\epsilon]\subseteq B$,
$$
\big\|\frac{r(e) (c_n+\frac{c_{n+1}}{q_{n+1}})}{q_n}\big\|= \big\|\frac{(q_n - t)(c_n+\frac{c_{n+1}}{q_{n+1}})}{q_n}\big\|=\big\|\frac{t (c_n+\frac{c_{n+1}}{q_{n+1}})}{q_n}\big\|\geq \frac{3}{40}
$$
$-$ which is a contradiction (in view of Eq (\ref{eq2aii})).
\end{itemize}
\item[\underline{Case-2b} :]  Let us consider $\lim\limits_{n\in B_1'}\frac{c_n}{q_n}=\lim\limits_{n\in B_1'}\frac{c_{n}+1}{q_n}=1$, i.e., $\lim\limits_{n\in B_1'}\frac{q_n - c_n-1}{q_n}=\lim\limits_{n\in B_1'}\frac{q_n - c_n}{q_n}=0$. For $\epsilon\in (0,\frac{1}{15})$ there exists $n_\epsilon$ such that for each $n\in B_1'\setminus [1,n_\epsilon]$, we have
$$
0<\frac{q_n- c_{n}-1}{q_{n}}<\frac{q_n- c_{n}}{q_{n}}<\epsilon \mbox{  (since } 1\leq c_n\leq q_n-2).
$$
Let $B+1$ is $q$-bounded. For each $n\in B_1'\setminus [1,n_\epsilon]$, we set $r(e)$ exactly as in Case-1b. Then Eq (\ref{eqB1t}) entails that
$$
\frac{t(q_n - c_n- 1)}{q_{n}} \leq \frac{q_n - c_n- 1}{5(q_n - c_n)}\leq \frac{1}{5}
$$
and,
\begin{equation}\label{eqnormgrt}
\frac{t(q_n - c_n- 1)}{q_{n}}>\frac{q_n - c_n- 1}{5(q_n - c_n)}- \frac{q_n - c_n- 1}{q_{n}}>\frac{1}{10}-\frac{1}{15}=\frac{1}{30}.
\end{equation}
Now observe that for each $n\in B_1'\setminus [1,n_\epsilon]\subseteq B$,
\begin{eqnarray*}
\big\|\frac{r(e) (c_n+1)}{q_n}\big\| = \big\|\frac{r(e) (q_n - c_n- 1)}{q_n}\big\| &=& \big\|\frac{(q_n - t)(q_n - c_n- 1)}{q_n}\big\| \\ &=&\big\|\frac{t (q_n - c_n- 1)}{q_n}\big\|\geq \frac{1}{30}
\end{eqnarray*}
$-$ which is a contradiction (in view of Eq (\ref{eq2ai})). \\
So, let us consider $B+1$ is $q$-divergent.  For each $n\in B_1'\setminus [1,n_\epsilon]$, we set $r(e)$ exactly as in Case-1b. Then Eq (\ref{eqB1t}) and Eq (\ref{eqnormgrt}) ensure that
$$
\frac{1}{30}<\frac{t(q_n - c_n-1)}{q_{n}}\leq \frac{t (q_n - c_n- \frac{c_{n+1}}{q_{n+1}})}{q_n} \leq \frac{t(q_n - c_n)}{q_{n}} \leq \frac{1}{5}.
$$
Now, in view of Eq (\ref{eqnormequal}), observe that for each $n\in B_1'\setminus [1,n_\epsilon]\subseteq B$,
\begin{eqnarray*}
\big\|\frac{r(e) (c_n+\frac{c_{n+1}}{q_{n+1}})}{q_n}\big\| &=& \big\|\frac{r(e) (q_n - c_n- \frac{c_{n+1}}{q_{n+1}})}{q_n}\big\| \\ &=& \big\|\frac{(q_n - t)(q_n - c_n- \frac{c_{n+1}}{q_{n+1}})}{q_n}\big\| \\ &=&\big\|\frac{t (q_n - c_n- \frac{c_{n+1}}{q_{n+1}})}{q_n}\big\|\geq \frac{1}{30}.
\end{eqnarray*}
$-$ which is a contradiction (in view of Eq (\ref{eq2aii})).
\end{itemize}
So, we conclude that Case-2 is not possible. Therefore, it is evident that if $supp(x)$ is infinite then $x\not\in t_{(e_n)}(\T)$. As a consequence, \cite[Theorem 2.3]{DG8} ensures that $t_{(e_n)}(\T)\subseteq  t_{(d_n)}(\T)$. Finally, in view of \cite[Corollary 2.4, (iii)]{DG8}, we conclude that $t_{(e_n)}(\T)$ is countable.
\end{proof}

%
%
\noindent{\textbf{Acknowledgement:}} The first author as RA and the second author as PI are thankful to SERB(DST) for the CRG project (No. CRG/2022/000264) and the second author is also thankful to SERB for the MATRICS project (No. MTR/2022/000111) during the tenure of which this work has been done.

\end{document}